\documentclass[11pt, oneside]{amsart}

\newcommand{\bC}{\mathbb{C}}

\newcommand{\bR}{\mathbb{R}}




\newcommand{\diff}{\normalfont\text{Diff}}
\newcommand{\symp}{\normalfont\text{Symp}}

\newtheorem{theorem*}[intro]{Theorem}
\newtheorem{corollary*}[intro]{Corollary}

\usepackage[utf8]{inputenc}
\usepackage[english]{babel}
\usepackage{amsmath}
\usepackage{amssymb}
\usepackage{mathabx}
\usepackage{amsfonts}
\usepackage{amsthm}
\usepackage{enumerate}
\usepackage[dvipsnames]{xcolor}
\usepackage{tikz-cd}
\usepackage{subfiles}
\usepackage[scr]{rsfso}
\usepackage{extarrows}
\usepackage{hyperref}
\usepackage{appendix}
\usepackage{bm}
\hypersetup{
    colorlinks=true,
    linkcolor=Blue,
    filecolor=red,      
    urlcolor=teal,
    citecolor=teal,
}
\oddsidemargin 0.1in
\textwidth 6.3in
\textheight 8.6in
\topmargin .1in

\newtheorem{theorem}{Theorem}[section]
\newtheorem{lemma}[theorem]{Lemma}

\newtheorem{question}[theorem]{Question}
\newtheorem{cor}[theorem]{Corollary}

\theoremstyle{definition}

\theoremstyle{remark}
\newtheorem{remark}[theorem]{Remark}

\newtheorem*{notation*}{Notation}

\numberwithin{equation}{subsection}

\makeatletter
\@addtoreset{equation}{section}
\@addtoreset{theorem}{section}
\makeatother

\newcommand{\Addresses}{{
  \bigskip
  \footnotesize

  \textsc{Luya Wang, Institute for Advanced Study}\par\nopagebreak
  \textit{E-mail address}: \texttt{luyawang@ias.edu}
}}

\setcounter{tocdepth}{1}
\begin{document}

\title{A note on a nontrivial connected component of the space of symplectic structures}
\author{Luya Wang}
\begin{abstract}
    This note provides the first example of a nontrivial connected component of the space of symplectic structures standard at infinity in dimension four.
 \end{abstract}

\maketitle

\section{Introduction}

Given a symplectic manifold $(X, \omega)$, one may consider the space of symplectic structures representing the same cohomology class, denoted as $\mathcal{S}_{[\omega]}$. Two symplectic forms are \emph{isotopic} if there exists a path of cohomologous symplectic forms connecting them. In the case when $X$ is open, we consider only the symplectic structures that are fixed at infinity and relative cohomology. In \cite[\textsection 13.1]{MS17}, \cite[\textsection 9.4]{MS12} and \cite{Sa13}, the following question was asked.\\

\noindent\textbf{Uniqueness question}: \textit{Is the space $\mathcal{S}_{[\omega]}$ connected?} \\

This question was answered in the negative in dimension eight and six in \cite{McDuff_infinite_examples} and \cite[Theorem 9.7.4]{MS12} for closed manifolds. In dimension four, the above uniqueness question is still open for closed manifolds. We give a negative answer to the open case: 

\begin{theorem*}\label{thm:nontrivial}
There exist infinitely many symplectic forms on $S^1 \times D^3$ standard at infinity, such that they are not pairwise isotopic through symplectic forms standard at infinity.
\end{theorem*}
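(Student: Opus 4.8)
The plan is to translate the isotopy problem, via Moser's theorem, into a question about which compactly supported diffeomorphisms of $S^1 \times D^3$ are realized symplectically, and then to exploit the gap between the enormous diffeomorphism group and the small symplectomorphism group of a standard model. First I would fix the model: realize $S^1 \times D^3$ as a subcritical Weinstein domain, smoothly identified with $T^*S^1 \times \bC$, with $\omega_{std}$ the product of the canonical symplectic form on $T^*S^1$ and the area form on $\bC$, and interpret ``standard at infinity'' as agreeing with $\omega_{std}$ near $\partial(S^1 \times D^3) = S^1 \times S^2$. Since $H^2(S^1 \times D^3) = 0$ and $H^2_c(S^1 \times D^3) \cong H_2(S^1 \times D^3) = 0$, every symplectic form standard at infinity is automatically (relatively) cohomologous to $\omega_{std}$, so $\cS_{[\omega_{std}]}$ is the entire space of symplectic forms standard at infinity and there are no cohomological subtleties. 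For any diffeomorphism $\phi$ of $S^1 \times D^3$ that is the identity near $\partial$, the form $\phi^*\omega_{std}$ lies in this space, and Moser stability relative to infinity shows that $\phi^*\omega_{std}$ is isotopic to $\omega_{std}$ through forms standard at infinity if and only if the class of $\phi$ lies in the image of $\pi_0\symp_c(S^1 \times D^3, \omega_{std}) \to \pi_0\diff(S^1 \times D^3, \partial)$, where $\symp_c$ denotes compactly supported symplectomorphisms. Hence it suffices to produce infinitely many classes in $\pi_0\diff(S^1 \times D^3, \partial)$ whose pairwise differences avoid that image.

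For the supply of diffeomorphisms I would invoke the theorem of Budney--Gabai that $\pi_0\diff(S^1 \times D^3, \partial)$ contains a subgroup isomorphic to $\bZ^{\infty}$, realized by their ``barbell'' diffeomorphisms, which are supported in the interior. Choosing representatives $\phi_1, \phi_2, \dots$ spanning independent classes, set $\omega_k := \phi_k^*\omega_{std}$. By the reduction above, $\omega_j$ and $\omega_k$ are isotopic through forms standard at infinity exactly when $[\phi_j^{-1}\phi_k]$ is represented, up to isotopy in $\diff(S^1 \times D^3, \partial)$, by a compactly supported symplectomorphism of $(S^1 \times D^3, \omega_{std})$; so the theorem will follow once the image of $\pi_0\symp_c(S^1 \times D^3, \omega_{std})$ in $\pi_0\diff(S^1 \times D^3, \partial)$ is shown to be too small to contain infinitely many of the classes $[\phi_j^{-1}\phi_k]$.

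The crux is therefore to bound $\pi_0\symp_c(S^1 \times D^3, \omega_{std})$, and here I would run a Gromov-style holomorphic foliation argument adapted to the Liouville setting. Complete $(S^1 \times D^3, \omega_{std})$ to $T^*S^1 \times \bC$, fix a compatible almost complex structure that is standard at infinity and split in the $\bC$-factor, and study the moduli space of finite-energy holomorphic planes asymptotic to the standard fibres $\{pt\} \times \bC$. Positivity of intersections in dimension four, together with Fredholm regularity for these planes, should show that for any compactly supported symplectomorphism $\psi$ the $\psi_*J$-holomorphic planes again foliate $T^*S^1 \times \bC$, so that $\psi$ is isotopic in $\symp_c$ to a symplectomorphism fibred over $T^*S^1$; since $\symp_c(\bC)$ is contractible (Moser together with Smale's theorem) and $\pi_0\symp_c(T^*S^1) \cong \bZ$ is generated by an area-preserving Dehn twist of the annulus, this yields $\pi_0\symp_c(S^1 \times D^3, \omega_{std}) \cong \bZ$. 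Its image in $\pi_0\diff(S^1 \times D^3, \partial)$ is then a single cyclic subgroup, and the quotient of the Budney--Gabai $\bZ^{\infty}$ by its intersection with this cyclic subgroup is still infinite; thus the assignment $k \mapsto [\omega_k]$ has infinite image in the set of isotopy classes, proving the theorem.

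I expect the main obstacle to be precisely this last step: making the holomorphic-foliation argument rigorous for the open Weinstein domain $T^*S^1 \times \bC$ — proving the symplectic field theory type compactness needed for the finite-energy planes (no breaking and no sphere bubbling, which should follow from exactness and $\pi_2 = 0$ once the contact data at infinity is arranged), establishing the requisite transversality, and carrying the ``standard at infinity'' condition through the entire straightening isotopy. A secondary point is to check that the barbell diffeomorphisms act trivially on all auxiliary homotopical data, so that the $\pi_0$-count above is genuinely the only obstruction to be analyzed, and that they remain independent after quotienting by the single $\bZ$ coming from $\symp_c$.
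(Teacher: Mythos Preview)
Your strategy is exactly the paper's: pull back $\omega_{\text{std}}$ by the Budney--Gabai diffeomorphisms, use Moser relative to the boundary to reduce the isotopy question to whether the class of $\phi_j^{-1}\phi_k$ lies in the image of $\pi_0\symp_c \to \pi_0\diff_c$, and then argue that this image is too small. The one substantive difference is that the paper does not attempt the holomorphic-foliation argument you outline; it simply cites Evans' theorem that $\symp_c(S^1\times D^3,\omega_{\text{std}})\simeq\symp_c(\bC^*\times\bC)$ is weakly contractible, so $\pi_0\symp_c$ is trivial and the contradiction is immediate. Your proposed reproof would work in outline (it is essentially how Evans proceeds), but your endpoint $\pi_0\symp_c\cong\bZ$ is off: the Dehn twist in the $T^*S^1$ factor does not give a compactly supported symplectomorphism of $T^*S^1\times\bC$, since the product with $\mathrm{id}_\bC$ is not compactly supported and there is no way to cut it off symplectically in the $\bC$-direction, so in fact $\pi_0\symp_c=\{*\}$. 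This slip is harmless for your conclusion (a cyclic image would already be small enough), but it means your ``secondary point'' about quotienting by a $\bZ$ and your worry about the barbell classes surviving that quotient both evaporate: once you invoke Evans, every nontrivial Budney--Gabai class gives a form non-isotopic to $\omega_{\text{std}}$, with no bookkeeping required.
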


In particular, the space $\mathcal{S}_{0}$ is not connected for $S^1 \times D^3$. The proof is essentially based on either \cite{budney2021knotted} or \cite{watanabe2023thetagraph} which studies the compactly supported diffeomorphisms non-isotopic to the identity, and a result of \cite{Evans} on the compactly supported symplectic mapping class group of $S^1 \times D^3$, itself building on \cite{Gr85}.

\section{Proof of Theorem 1}

Extending the work of Gromov on the group of compactly supported symplectomorphisms $\symp_{c}(D^4, \omega_{0})$ \cite{Gr85}, Evans showed the following. The standard symplectic structure on $S^1 \times D^3 \cong \bC^* \times \bC$ is given by the product symplectic form.

\begin{theorem}\cite[Theorem 6]{Evans}
\label{thm:evans}
    The compactly supported symplectomorphism group $\symp_{c}(S^1 \times D^3, \omega_{\text{std}})$ is weakly contractible.
\end{theorem}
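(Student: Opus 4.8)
The plan is to follow the scheme of \cite{Gr85} and \cite{Evans}, reducing the weak contractibility of $\symp_c(S^1\times D^3,\omega_{\text{std}})$ to the topology of spaces of pseudoholomorphic curves on a closed rational surface. First I would compactify. Writing the standard structure as $\bC^*\times\bC$ with $\bC^*=S^2\setminus\{0,\infty\}$ and $\bC=S^2\setminus\{\infty\}$, one obtains a symplectic embedding of a neighborhood of infinity into $(S^2\times S^2,\omega)$ for a suitable product form, whose complementary divisor is the chain of three rational curves $D=V_0\cup H\cup V_\infty$, where $V_0=\{0\}\times S^2$ and $V_\infty=\{\infty\}\times S^2$ lie in the fiber class $F$ and $H=S^2\times\{\infty\}$ lies in the base class $B$. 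After a standard capping and rescaling, a compactly supported symplectomorphism extends by the identity to a symplectomorphism of $S^2\times S^2$ fixing a neighborhood of $D$, and this sets up a weak homotopy equivalence $\symp_c(\bC^*\times\bC)\simeq G_D$, where $G_D\subset\symp(S^2\times S^2,\omega)$ is the subgroup fixing a neighborhood of $D$. It then suffices to prove that $G_D$ is weakly contractible.

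Second, I would bring in Gromov's foliation machinery on the closed surface. For every tamed $J$ the two rulings are realized by unique $J$-holomorphic foliations in the classes $B$ and $F$: existence, uniqueness, and regularity follow from Gromov compactness, positivity of intersections, and automatic transversality, the relevant self-intersection numbers being nonnegative. The space of such $J$ adapted to $D$ is contractible, and hence so are the associated spaces of foliations. This is precisely the input that Gromov and Abreu--McDuff use to pin down the homotopy type of $\symp(S^2\times S^2,\omega)$, which I would take as known.

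Third, I would peel off the divisor by a tower of fibrations. Letting the ambient symplectomorphism group act on the spaces of $J$-holomorphic representatives of $V_0$, $V_\infty$, and $H$, together with their configuration of intersection points, the evaluation and restriction maps are Serre fibrations whose bases are the (contractible, or explicitly computable) spaces of holomorphic curves supplied by the foliation picture, and whose fibers are the successive subgroups imposing the conditions that cut the ambient group down to $G_D$. Running the long exact sequences of this tower, the homotopy groups contributed by $\symp(S^2\times S^2)$ and by the symplectomorphism groups of the individual divisor curves cancel against those of the configuration spaces, leaving $\pi_k(G_D)=0$ for all $k$, including $\pi_0$.

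The main obstacle is twofold. Analytically, one must make the open-to-closed reduction genuinely faithful: the capping, and the equivalence between ``standard at infinity'' and ``fixing a neighborhood of $D$'', has to be carried out so that no homotopy is lost, which is delicate because the leaves of the relevant foliation are noncompact in the open model and only become spheres after compactification. Homotopically, the real content is that lower-dimensional obstructions must be shown to die: for instance the Dehn twist supported in the $\bC^*$-factor, which is nontrivial in $\symp_c(\bC^*)$, must become isotopic to the identity once the extra $\bC$-direction is available, and it is precisely the four-dimensional rigidity of the $J$-holomorphic foliations that forces this and drives the cancellation in the fibration tower. Verifying that every space in the tower has exactly the asserted homotopy type, and that the maps are honest fibrations, is where the bulk of the work lies.
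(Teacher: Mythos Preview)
The paper does not give its own proof of this statement: Theorem~\ref{thm:evans} is quoted from \cite[Theorem~6]{Evans} and used as a black box in the proof of Theorem~\ref{thm:nontrivial}. There is therefore nothing in the paper to compare your proposal against.

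That said, what you have written is a faithful outline of Evans's own argument. The compactification of $\bC^*\times\bC$ to $(S^2\times S^2,\omega)$ with the chain divisor $D=V_0\cup H\cup V_\infty$, the identification of $\symp_c$ with the stabilizer $G_D$ of a neighborhood of $D$, and the reduction of $G_D$ to a point via a tower of fibrations whose bases are spaces of $J$-holomorphic curves in the ruling classes, is exactly the strategy of \cite{Evans}, itself modeled on \cite{Gr85}. Your identification of the delicate points---the open-to-closed capping being a genuine weak equivalence, and the death of the $\bC^*$-Dehn twist once the extra $\bC$-factor is present---is also accurate. If your intention was to sketch Evans's proof rather than to supply something the present paper omits, you have done so correctly; but for the purposes of this note no proof is required, only the citation.
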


Our result is then an immediate corollary of the works of Budney-Gabai and Watanabe \cite{budney2021knotted,watanabe2023thetagraph}. Among other results, Budney-Gabai and Watanabe proved the following.

\begin{theorem}\cite[Theorem 3.15]{budney2021knotted} \cite[Corollary 1.5]{watanabe2023thetagraph}
\label{thm:watanabe}
    There exist infinitely many elements of $\pi_{0}(\diff_{c}(S^1\times D^3))$.
\end{theorem}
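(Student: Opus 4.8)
The plan is to exhibit an explicit infinite family $\phi_1, \phi_2, \dots \in \diff_c(S^1 \times D^3)$ together with an isotopy invariant that separates its members, so that they represent pairwise distinct elements of $\pi_0$.

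\emph{Construction of the family.} I would start from the barbell construction of Budney--Gabai. Fix a model \emph{barbell} $B \subset \bR^4$, a regular neighborhood of two $2$-spheres joined by a tube, together with a compactly supported diffeomorphism $\beta$ of $B$ associated to a loop of embeddings of one bell into the complement of the other that is nontrivial in $\pi_1$ of the relevant embedding space (a Gramain/Dax-type generator, which drags one meridian sphere once around the core of the other). Since the interior of $S^1 \times D^3$ is $S^1 \times \bR^3$, it admits embeddings $\iota_n \colon B \hookrightarrow S^1 \times D^3$ whose tube winds $n$ times around the $S^1$ factor; pushing forward $\beta$ gives the candidate diffeomorphisms $\phi_n := (\iota_n)_* \beta$, each supported in a compact set.

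\emph{Detection.} The crux is to show the $\phi_n$ are pairwise non-isotopic through compactly supported diffeomorphisms, which requires an invariant of $\pi_0(\diff_c(S^1 \times D^3))$ refined enough to see the winding. I would follow one of two routes. Route A (Budney--Gabai): associate to $\phi$ the track of a fixed embedded $2$-sphere under an isotopy from $\phi$ to the identity and extract a Dax-type, $\pi_1$-equivariant count of double points; since $\pi_1(S^1 \times D^3) = \bZ$, this invariant lands in an infinitely generated $\bZ[\bZ]$-module, and the $\phi_n$ are arranged to hit distinct values. Route B (Watanabe): form the mapping torus $W_{\phi} \to S^1$, a $5$-manifold, choose a propagator $3$-form on the fiberwise configuration space of two points (restricting to the $S^3$-fiber generator near the diagonal and controlled at infinity), and integrate the product of three such propagators pulled back along the three edges of the theta graph, a $9$-form, over the $9$-dimensional fiberwise configuration space of the two vertices; after the standard anomaly correction this produces a real number $Z(\phi)$ invariant under compactly supported isotopy, and one checks that $Z(\phi_n)$ takes infinitely many distinct values.

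Either invariant shows that $\phi_n$ and $\phi_m$ lie in different path components of $\diff_c(S^1 \times D^3)$ whenever $n \ne m$, so $\pi_0(\diff_c(S^1 \times D^3))$ is infinite. The main obstacle is precisely the detection step: in Route A one must verify that the double-point count is genuinely an isotopy invariant of the diffeomorphism (independent of the chosen isotopy and of the auxiliary sphere) and then compute it to be nonconstant along the family; in Route B one must prove that the configuration-space integral converges on the open manifold, that it is independent of the propagator up to the anomaly term, and carry out the nonvanishing local computation at the barbell---the analytic heart of Watanabe's disproof of the $4$-dimensional Smale conjecture and of his theta-graph invariant. Granting the invariance and the nonvanishing computation, the infinitude of $\pi_0(\diff_c(S^1 \times D^3))$ follows immediately.
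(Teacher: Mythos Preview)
The paper does not prove this statement at all: Theorem~\ref{thm:watanabe} is quoted as a black box from \cite{budney2021knotted} and \cite{watanabe2023thetagraph}, and the paper's own contribution begins only afterwards, combining it with Evans' Theorem~\ref{thm:evans} via a short Moser argument. So there is no ``paper's own proof'' to compare against.

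Your sketch is a fair high-level summary of what actually happens in the cited references---the barbell implantation with varying winding number and a Dax-type $\bZ[\pi_1]$-valued invariant is indeed Budney--Gabai's mechanism, and the theta-graph configuration-space integral is Watanabe's. But as you yourself flag, the entire content of those papers lies in the detection step (well-definedness of the invariant and the nonvanishing computation), which you explicitly grant rather than carry out. So your proposal is not a proof but an accurate table of contents for one; in the context of this note, the correct move is simply to cite the result, exactly as the paper does.
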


\begin{proof}[Proof of Theorem \ref{thm:nontrivial}]
    Let $f \in \diff_{c}(S^1\times D^3)$ be a nontrivial element from Theorem \ref{thm:watanabe} and $\omega_{1} := f^* \omega_{\text{std}}$. Since $H^2(S^1 \times D^3, S^1\times S^2; \bR)=0$, $\omega_1$ and $\omega_{\text{std}}$ have the same cohomology class. We show that $\omega_{1}$ is not isotopic to $\omega_{\text{std}}$. The pairwise version works exactly the same. Suppose for the contradiction that $\omega_{1}$ is isotopic to $\omega_{\text{std}}$. By Moser isotopy, e.g. \cite[Theorem 3.2.4 and Exercise 3.2.6]{MS17}, we have an isotopy $\phi_t$, such that $\phi_0 =\text{id}$ and $\phi_1^*\omega_{1} = \omega_{\text{std}}$. Therefore, $\phi_1^*f^* \omega_{\text{std}} = \omega_{\text{std}}$. So $f\circ \phi_1$ is a symplectomorphism. Then, by Theorem \ref{thm:evans}, we know that $f\circ \phi_1$ is isotopic to the identity. Contradiction.
\end{proof}

The above arguments can be readily applied to study higher homotopy groups of the space of symplectic structures as well.

\begin{lemma}
\label{lem:higher_homotopy_group}
    Given $k\geq 0$, the $k^\text{th}$-homotopy group of the space of symplectic structures is nontrivial for any $(X, \omega)$ such that $H^2(X, \partial X; \bR) = 0$, $\pi_k(\diff_c(X)) \neq \{*\}$ and $\pi_k(\symp_c(X, \omega)) = \{*\}$. 
\end{lemma}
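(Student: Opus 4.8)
The plan is to run the proof of Theorem \ref{thm:nontrivial} in families. Write $\mathcal{S}_{[\omega]}$ for the space of symplectic forms on $X$ that agree with $\omega$ near infinity and represent the relative class $[\omega]$, and consider the based map $\mu\colon\diff_c(X)\to\mathcal{S}_{[\omega]}$, $\mu(f)=f^*\omega$, with $\mu(\mathrm{id})=\omega$. The hypothesis $H^2(X,\partial X;\bR)=0$ enters first to see that $\mu$ is well defined: $f^*\omega-\omega$ is closed and compactly supported, hence exact with a compactly supported primitive, so $f^*\omega$ again lies in $[\omega]$. The case $k=0$ of the lemma is exactly Theorem \ref{thm:nontrivial} (with the input weakened from weak contractibility of $\symp_c$ to $\pi_0(\symp_c)=\{*\}$), and I would phrase the argument so that it specializes to that one. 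Fix a based map $\alpha\colon S^k\to\diff_c(X)$, $\alpha(*)=\mathrm{id}$, representing a nontrivial class; it suffices to show $\mu\circ\alpha$ is nontrivial in $\pi_k(\mathcal{S}_{[\omega]},\omega)$.

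Suppose for contradiction that $\mu\circ\alpha$ is null, witnessed by a based nullhomotopy $H\colon S^k\times[0,1]\to\mathcal{S}_{[\omega]}$ with $H(\cdot,0)=\alpha(\cdot)^*\omega$, $H(\cdot,1)\equiv\omega$ and $H(*,t)\equiv\omega$; after a routine smoothing take $H$ smooth. The heart of the matter, and the step I expect to be the main obstacle, is a relative, parametrized form of Moser stability: choosing a compactly supported primitive $\sigma_{x,t}$ of $\partial_t H(x,t)$ depending smoothly on $(x,t)$ (possible precisely because of the cohomological hypothesis), defining $v_{x,t}$ by $\iota_{v_{x,t}}H(x,t)=-\sigma_{x,t}$, and integrating these compactly supported, smoothly varying vector fields, one obtains a smooth family $\phi_{x,t}\in\diff_c(X)$ with $\phi_{x,0}=\mathrm{id}$ and $\phi_{x,t}^*H(x,t)=H(x,0)=\alpha(x)^*\omega$; since $H(*,\cdot)$ is constant, $\phi_{*,t}=\mathrm{id}$ for all $t$. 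This is the parametrized analogue of \cite[Theorem 3.2.4 and Exercise 3.2.6]{MS17}; the technical points to verify are the smooth, compactly supported dependence of the primitives on all parameters and the completeness of the flows, both standard but worth doing carefully, and both reliant on $H^2(X,\partial X;\bR)=0$.

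Granting this, set $h(x):=\alpha(x)\circ\phi_{x,1}^{-1}$. Evaluating the Moser identity at $t=1$ gives $\phi_{x,1}^*\omega=\alpha(x)^*\omega$, hence $h(x)^*\omega=\omega$, so $h$ defines a map $S^k\to\symp_c(X,\omega)$, based since $\phi_{*,1}=\mathrm{id}$. By hypothesis $\pi_k(\symp_c(X,\omega))=\{*\}$, so $h$ is based-nullhomotopic in $\symp_c(X,\omega)$, hence in $\diff_c(X)$. On the other hand $(x,s)\mapsto h(x)\circ\phi_{x,s}$ is a based homotopy in $\diff_c(X)$ from $h$ at $s=0$ to $\alpha$ at $s=1$, so $[\alpha]=[h]=0$ in $\pi_k(\diff_c(X))$, contradicting the choice of $\alpha$. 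Therefore $\mu\circ\alpha$ is nontrivial in $\pi_k(\mathcal{S}_{[\omega]},\omega)$, which is thus nontrivial. For $k=0$, reading $S^0=\{*,\mathrm{pt}\}$ and $f=\alpha(\mathrm{pt})$, this is verbatim the proof of Theorem \ref{thm:nontrivial}.
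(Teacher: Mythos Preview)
Your proof is correct and follows essentially the same route as the paper: take a nontrivial $S^k$-family in $\diff_c(X)$, push it forward to symplectic forms via pullback, assume nullhomotopy, apply parametrized Moser to land in $\symp_c(X,\omega)$, and contradict $\pi_k(\symp_c)=\{*\}$. Your version is more carefully stated than the paper's own sketch---you track basepoints explicitly, explain why the Moser flow can be taken to be the identity over $*\in S^k$, and spell out where $H^2(X,\partial X;\bR)=0$ enters in choosing compactly supported primitives smoothly in the parameters---but the underlying argument is the same.
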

\begin{proof}
    The proof is similar to that of Theorem \ref{thm:nontrivial}. Consider a nontrivial $k$-parameter family of compactly supported diffeomorphisms $f_{r_1, \dots, r_k}$, where $r_i\in [0,1]$, given by $\pi_k(\diff_c(X)) \neq \{*\}$. Suppose for the contradiction that $f_{r_1, \dots, r_k}^* \omega_{\text{std}}$ is isotopic to $\omega_{\text{std}}$ via symplectic forms that are fixed at infinity. Then by Moser isotopy, we have a $k$-parameter family of compactly supported isotopies $\phi_{t;r_1, \dots, r_k}$ such that $\phi_{1;r_1, \dots, r_k}^* f_{r_1, \dots, t_k}^* \omega_{\text{std}} = \omega_{\text{std}}$ and $\phi_{0;r_1, \dots, r_k} = \text{id}$. Again, now $f_{r_1, \dots, t_k}\circ\phi_{1;r_1, \dots, r_k}$ is a $k$-parameter family of compactly supported symplectomorphisms, which by $\pi_k(\symp_c(X, \omega)) = \{*\}$, must be isotopic to the identity. This contradicts the assumption on $f_{r_1, \dots, r_k}$.
\end{proof}

\begin{cor}[\cite{watanabe2019exoticnontrivialelementsrational} \textsection 1.1(3)]
    The space of symplectic structures on $D^4$ standard at infinity is not contractible.
\end{cor}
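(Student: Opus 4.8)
The plan is to obtain this as a direct application of Lemma~\ref{lem:higher_homotopy_group} with $X = D^4$ and $\omega = \omega_0$ the standard symplectic form, so the only work is to check that the three hypotheses of the lemma hold for $D^4$ and that one of the resulting nontrivial homotopy groups lies in positive degree. The cohomological hypothesis is immediate: since $D^4/\partial D^4 \cong S^4$ we have $H^2(D^4, \partial D^4; \bR) \cong \widetilde{H}^2(S^4; \bR) = 0$. The symplectic hypothesis is Gromov's theorem \cite{Gr85}: the group $\symp_c(D^4, \omega_0)$ is weakly contractible, so $\pi_k(\symp_c(D^4, \omega_0)) = \{*\}$ for every $k \geq 0$. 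The smooth hypothesis is the deep input, namely Watanabe's disproof of the $4$-dimensional Smale conjecture in positive degrees \cite{watanabe2019exoticnontrivialelementsrational}, which exhibits some $k \geq 1$ with $\pi_k(\diff_c(D^4)) \neq \{*\}$ (detected via configuration-space integrals and graph cohomology).

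Granting these, Lemma~\ref{lem:higher_homotopy_group} applied in degree $k$ yields that $\pi_k$ of the space of symplectic structures on $D^4$ standard at infinity, based at $\omega_0$, is nontrivial. Since every homotopy group of a contractible space is trivial, it follows that this space is not contractible, which is the assertion of the corollary.

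The one point that genuinely requires attention is that we must invoke Watanabe's result in a strictly positive degree: the case $k = 0$ of Lemma~\ref{lem:higher_homotopy_group} would require $\pi_0(\diff_c(D^4)) \neq \{*\}$, which is exactly the still-open $\pi_0$-form of the $4$-dimensional Smale conjecture, whereas Watanabe's construction produces its nontrivial classes in higher homotopy groups. Beyond selecting such a $k$ and citing the two external theorems, the argument is the verbatim specialization of the proof of Lemma~\ref{lem:higher_homotopy_group}, so there is no further obstacle; the substantive content is entirely carried by Gromov's contractibility theorem on the symplectic side and Watanabe's theorem on the smooth side.
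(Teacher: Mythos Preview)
Your proof is correct and follows essentially the same approach as the paper: both apply Lemma~\ref{lem:higher_homotopy_group} to $D^4$, invoking Gromov's weak contractibility of $\symp_c(D^4,\omega_{\text{std}})$ and Watanabe's nontriviality of some higher $\pi_k(\diff_c(D^4))$. Your version is slightly more detailed in explicitly verifying $H^2(D^4,\partial D^4;\bR)=0$ and in flagging that one must use $k\geq 1$ to avoid the open $\pi_0$ Smale conjecture, but the argument is the same.
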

\begin{proof}
    By Gromov \cite{Gr85} showing weak contractibility of $\symp_c(D^4, \omega_{\text{std}})$, together with the constructions in \cite[Theorem 1.1]{watanabe2023thetagraph} on nontriviality of certain higher homotopy groups of $\diff_c(D^4)$, we have the conclusion by applying Lemma \ref{lem:higher_homotopy_group}.
\end{proof}
 
\begin{remark}
    As remarked in \cite{watanabe2019exoticnontrivialelementsrational,  Sa13, MS12, Eliashberg, Kirby}, given a symplectic form $\omega$, one can consider the orbits of diffeomorphisms acting on the space $\mathcal{S}_{[\omega]}$. By Moser isotopy, we have that given any two forms in the same connected component of $\mathcal{S}_{[\omega]}$, there is an isotopy connecting them, i.e. the action is transitive on each connected component. Denote the connected component of $\omega$ by $\mathcal{S}_{\omega}$. Now, one needs to restrict to the identity smooth components (or restricting to symplectic forms that are in the orbit under $\diff(X)$ of $\omega$) for $\symp(X, \omega)$ and $\diff(X)$ in order to obtain a fibration 
    $$\symp_0(X, \omega) \to \diff_0(X) \to \mathcal{S}_{\omega}.$$
    To utilize the exotic diffeomorphisms in \cite{watanabe2023thetagraph, watanabe2019exoticnontrivialelementsrational, budney2021knotted}, we therefore prefer to adopt the direct arguments as in the proof of Theorem \ref{thm:nontrivial} and Lemma \ref{lem:higher_homotopy_group}.
\end{remark}

\section{Additional observations and questions}
As evidenced in the proof of Theorem \ref{thm:nontrivial}, the question of finding nontrivial connected components of the space $\mathcal{S}_{[\omega]}$ is deeply connected to finding a diffeomorphism that is not isotopic to the identity. We refer to them as exotic symplectic forms and exotic diffeomorphisms.

The following can be considered as a symplectic version of the Smale conjecture, asked by Eliashberg in \cite[Problem 8]{Eliashberg} and \cite[Problem 4.141(D)]{Kirby}.

\begin{question}
\label{question: Eliashberg}
    Is the space $\mathcal{S}_0$ of all standard-at-infinity symplectic structures on $\bR^4$ connected?
\end{question}

\begin{question}[$\pi_0$ version of $4$-dimensional Smale conjecture]
\label{question: pi_0_smale}
    Is the space $\diff_c(\bR^4)$ of all compactly supported diffeomorphisms on $\bR^4$ connected?
\end{question}

\begin{lemma}
    Question \ref{question: Eliashberg} is equivalent to Question \ref{question: pi_0_smale}.
\end{lemma}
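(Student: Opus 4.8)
We must show that $\mathcal{S}_0$ is connected if and only if $\diff_c(\bR^4)$ is connected, and the plan is to prove the two implications separately. We use two theorems of Gromov \cite{Gr85} as black boxes: (i) the group $\symp_c(\bR^4,\omega_{\text{std}})$, which may be identified with $\symp_c(D^4,\omega_{\text{std}})$, is weakly contractible, hence connected; and (ii) every symplectic form on $\bR^4$ that agrees with $\omega_{\text{std}}$ outside a compact set equals $g^{*}\omega_{\text{std}}$ for some $g\in\diff_c(\bR^4)$ (see also \cite[\textsection 13.1]{MS17}). Throughout, $\mathcal{S}_0$ denotes the space of such forms; since $H^{2}_{c}(\bR^4;\bR)=0$, any two of its elements lie in the same relative cohomology class, so the compactly supported Moser argument of \cite[Theorem 3.2.4 and Exercise 3.2.6]{MS17} used in the proof of Theorem \ref{thm:nontrivial} applies to any path in $\mathcal{S}_0$.

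First I would prove that connectedness of $\diff_c(\bR^4)$ implies connectedness of $\mathcal{S}_0$. Given $\omega\in\mathcal{S}_0$, write $\omega=g^{*}\omega_{\text{std}}$ with $g\in\diff_c(\bR^4)$ using (ii), and choose a path $g_t\in\diff_c(\bR^4)$ with $g_0=\text{id}$ and $g_1=g$; since $[0,1]$ is compact, all the $g_t$ are supported in a common compact set $K$. Then $t\mapsto g_t^{*}\omega_{\text{std}}$ is a path of symplectic forms, each equal to $\omega_{\text{std}}$ outside $K$, joining $\omega_{\text{std}}$ to $\omega$; this is a path in $\mathcal{S}_0$, so $\mathcal{S}_0$ is connected.

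Conversely, suppose $\mathcal{S}_0$ is connected and let $f\in\diff_c(\bR^4)$. Then $\omega:=f^{*}\omega_{\text{std}}\in\mathcal{S}_0$, so there is a path in $\mathcal{S}_0$ from $\omega_{\text{std}}$ to $\omega$; the Moser argument turns it into an isotopy $\psi_t\in\diff_c(\bR^4)$ with $\psi_0=\text{id}$ and $\psi_1^{*}\omega=\omega_{\text{std}}$, that is, $(f\circ\psi_1)^{*}\omega_{\text{std}}=\omega_{\text{std}}$. Hence $f\circ\psi_1\in\symp_c(\bR^4,\omega_{\text{std}})$, which by (i) is connected, so $f\circ\psi_1$ is isotopic to the identity inside $\symp_c(\bR^4,\omega_{\text{std}})\subset\diff_c(\bR^4)$. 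Since $\psi_1$ is also isotopic to the identity in $\diff_c(\bR^4)$ (via $\psi_t$) and $\diff_c(\bR^4)$ is a topological group, $f=(f\circ\psi_1)\circ\psi_1^{-1}$ is isotopic to the identity there; hence $\diff_c(\bR^4)$ is connected.

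The entire content of the lemma is imported from Gromov's theorems (i) and (ii); the equivalence itself is soft. The only point needing care is the bookkeeping already present in the proof of Theorem \ref{thm:nontrivial}: one must know that a continuous path in $\mathcal{S}_0$ is standard outside a single compact set, so that the generating Moser vector field is compactly supported and integrates to a path in $\diff_c(\bR^4)$; and one must invoke the compactly supported form (ii) of Gromov's classification rather than a weaker version allowing symplectomorphisms that are only asymptotically standard. In effect this is the $\bR^4$ instance of the fibration heuristic $\symp_0\to\diff_0\to\mathcal{S}_\omega$ from the remark above, carried out by hand so as to avoid the identity-component subtleties.
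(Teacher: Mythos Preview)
Your proof is correct and follows essentially the same approach as the paper's own proof: both directions rely on Gromov's two results (weak contractibility of $\symp_c(\bR^4,\omega_{\text{std}})$ and the fact that every standard-at-infinity form on $\bR^4$ is a compactly supported diffeomorphic pullback of $\omega_{\text{std}}$) together with the Moser argument, exactly as in the proof of Theorem~\ref{thm:nontrivial}. The only cosmetic difference is that you phrase each implication as a contrapositive (``connected $\Rightarrow$ connected'') whereas the paper argues directly (``exotic $\Rightarrow$ exotic''); the logical content and the cited inputs are identical.
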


\begin{proof}
    Suppose that we have a symplectic structure $\omega_{\text{exot}}$ on $\bR^4$ that is standard at infinity and not isotopic to $\omega_{\text{std}}$, the standard symplectic form on $\bR^4$. By \cite[\S 0.3.C.]{Gr85}, we know that there exists a compactly supported diffeomorphism $g$ such that $g^*\omega_{\text{exot}} = \omega_{\text{std}}$. However, by assumption there is no compactly supported diffeomorphism isotopic to the identity pulling $\omega_{\text{exot}}$ back to $\omega_{\text{std}}$. Therefore, $g$ is not isotopic to the identity. On the other hand, suppose that $\pi_0(\diff_c(\bR^4)) \neq \{*\}$. Then we can use the same argument as that in the proof of Theorem \ref{thm:nontrivial}, together with Gromov's work on $\pi_0(\diff_c(\bR^4, \omega_{\text{std}})) = \{*\}$ as discussed above, to obtain a nontrivial connected component of $\mathcal{S}_0$ standard-at-infinity on $\bR^4$.
\end{proof}

Therefore, a natural question to ask is whether the forms obtained by attaching the standard Weinstein $2$-handles along the Legendrian knots smoothly isotopic to $S^1 \times \{*\} \subset S^1 \times S^2$ are isotopic or not. Clearly the symplectic form obtained from attaching the standard Weinstein $2$-handle on $(S^1\times D^3, \omega_{\text{std}})$ is the standard symplectic form on $D^4$.

\begin{question}
\label{question: 2handle}
    Are the forms on $D^4$ obtained by attaching the standard Weinstein $2$-handles on $(S^1\times D^3, \omega_i)$ given in Theorem \ref{thm:nontrivial} isotopic to the standard symplectic form?
\end{question}

A negative answer to Question \ref{question: 2handle} would give a negative answer to Question \ref{question: Eliashberg} as well as Question \ref{question: pi_0_smale}. One could hope to construct a isotopy between the symplectic forms obtained by carving out a family of $2$-handles on $D^4$ via composing the Moser isotopy with an isotopy given by \cite{Eliashberg_Polterovich}. The difficulty is to make sure that this family of $2$-handles does not interact with the exotic diffeomorphisms, or interacts in a controlled way.

\begin{remark}
    Since the appearance of this note, a negative answer has been claimed for Question \ref{question: pi_0_smale} in \cite{pi0Smale}. Nevertheless, it would be interesting to find a symplectic proof of Question \ref{question: Eliashberg} directly.
\end{remark}

\subsection*{Acknowledgements} LW thanks Yakov Eliashberg, John Etnyre and Tadayuki Watanabe for generous and enlightening communications. LW is grateful for Kai Cieliebak, Eduardo Fern\'andez, David Gabai, Amanda Hirschi, Ailsa Keating, Ciprian Manolescu, Anubhav Mukherjee, Agniva Roy and Daniel Ruberman for related discussions and interest. LW thanks the referee and acknowledges support from NSF Grant DMS-2303437, IAS Giorgio and Elena Petronio Fellow II Fund, and Simons Collaboration - New Structures in Low-dimensional topology.

\bibliographystyle{amsalpha}
\bibliography{bib}

\Addresses

\end{document}